\date{ }
\newcommand{\ga}{\Gamma}
\newcommand{\se}{1\lhd H\lhd K\lhd G}
\newcommand{\al}{\alpha}
\newtheorem{theorem}{Theorem}[section]
\newtheorem{lemma}[theorem]{Lemma}
\title{\bf On The UNRECOGNIZABILITY BY PRIME GRAPH FOR THE ALMOST SIMPLE GROUP ${\rm {\bf PGL(2,9)}}$}
\author{{\bf Ali Mahmoudifar}\\
Department of Mathematics, Tehran North Branch,\\ Islamic Azad University, Tehran, Iran\\ e-mail: alimahmoudifar@gmail.com}
\begin{document}
\maketitle
\begin{abstract}
The prime graph of a finite group $G$ is denoted by
$\ga(G)$.
Also $G$ is called recognizable by prime graph if and only if each finite group $H$
with $\ga(H)=\ga(G)$, is isomorphic to $G$. In this paper, we classify all finite groups with the same prime graph as $\textrm{PGL}(2,9)$. In particular, we present some solvable groups with the same prime graph as $\textrm{PGL}(2,9)$.
\end{abstract}
{\bf 2000 AMS Subject Classification}:  $20$D$05$, $20$D$60$,
20D08.
\\
{\bf Keywords :} Projective general linear group, prime graph, recognition.
\section{Introduction}
Let $n$ be a natural number.
We denote by $\pi(n)$, the set of all prime divisors of $n$.
Also Let $G$ be a finite group. The set $\pi(|G|)$ is denoted by
$\pi(G)$. The set of element orders of $G$ is denoted by
$\pi_e(G)$. We denote by $\mu(S)$, the maximal numbers of $\pi_e(G)$ under the divisibility
relation. The \textit{prime graph} of $G$ is a graph whose vertex
set is $\pi(G)$ and two distinct primes $p$ and $q$ are joined by an
edge (and we write $p\sim q$), whenever $G$ contains an element of
order $pq$. The prime graph of $G$ is denoted by $\ga(G)$.
A finite group $G$ is called
\textit{recognizable by prime graph} if for every finite group $H$ such that $\ga(G) = \ga(H)$, then we have
$G\cong H$. So $G$ is \textit{recognizable by prime graph} whenever there exists a fin finite group $K$ such that $\ga(K)=\ga(G)$ in while $K$ is not isomorphic to $G$.

For the almost simple group ${\rm PGL}(2,q)$, there are a lot of results about the recognition by prime graph. In \cite{13}, it is proved that if
$p$ is a prime number which is not a Mersenne or Fermat prime and $p \not= 11$, $19$ and
$\ga(G) = \ga(\textrm{PGL}(2, p))$, then $G$ has a unique nonabelian composition factor which is
isomorphic to $\textrm{PSL}(2, p)$ and if $p = 13$, then $G$ has a unique nonabelian composition
factor which is isomorphic to $\textrm{PSL}(2,13)$ or $\textrm{PSL}(2,27)$.
We know that ${\rm PGL}(2,2^{\al})\cong {\rm PSL}(2,2^{\al})$. For the characterization of such simple groups we refer to \cite{15,16}. In \cite{pgl}, it is proved that if $q = p^{\al}$, where $p$ is an odd prime and $\al$ is an odd natural number, then $\textrm{PGL}(2, q)$ is uniquely determined by its prime graph.

By the above description, we get that the characterization by prime graph of $\textrm{PGL}(2, p^k)$, where $p$ is an odd prime number and $k$ is even, is an open problem.
In this paper as the main result we consider the recognition by prime graph of the almost simple groups $\textrm{PGL}(2,3^2)$. Moreover, we construct some solvable group with the same prime graph as $\textrm{PGL}(2,3^2)$.
\section{Preliminary Results}
\begin{lemma}\label{maza}
Let $G$ be a finite group and $N \unlhd G$ such that $G/N$ is a
Frobenius group with kernel $F$ and cyclic complement $C$. If
$(|F|,|N|)=1$ and $F$ is not contained in $NC_G(N)/N$, then
$p|C|\in\pi_e(G)$ for some prime divisor $p$ of $|N|$.
\end{lemma}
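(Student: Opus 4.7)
The plan is to lift the Frobenius structure of $G/N$ into $G$, pin down a Sylow subgroup of $N$ on which the lifted Frobenius kernel acts nontrivially, and then combine a fixed-point-free action with a Frobenius-action fixed-point theorem to exhibit an element of order $p|C|$.

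First I would lift the kernel. Let $H$ be the preimage of $F$ in $G$; then $H\unlhd G$ and $H/N\cong F$, and since $(|F|,|N|)=1$, the Schur--Zassenhaus theorem produces a complement $F_0\cong F$ with $H=N\rtimes F_0$, unique up to $N$-conjugacy. The hypothesis $F\not\subseteq NC_G(N)/N$ forces some $f\in F_0$ to lie outside $NC_G(N)$; in particular $f$ does not centralize $N$, so $F_0$ acts nontrivially on $N$. Because for every prime $r$ dividing $|N|$ coprime action supplies an $F_0$-invariant Sylow $r$-subgroup, and any such choice of one Sylow per prime generates $N$ (its order is divisible by $|N|$), there must exist a prime $p\mid |N|$ and an $F_0$-invariant Sylow $p$-subgroup $P$ of $N$ on which $F_0$ acts nontrivially.

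Next I would lift a generator of $C$. Choose $y\in G$ with $yN$ a generator of $C$. Since all complements to $N$ in $H$ are $N$-conjugate, I may adjust $y$ by an element of $N$ so that $y$ normalizes $F_0$, and a Frattini argument applied to $\langle F_0,y\rangle$ acting on the Sylow $p$-subgroups of $N$ lets me further assume that $y$ normalizes $P$. The conjugation action of $y$ on $F_0\cong F$ then coincides with the action of a generator of the Frobenius complement $C$ on its kernel $F$ inside $G/N$, which is fixed-point-free.

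The final step, which I expect to be the main obstacle, is to produce an element of order exactly $p|C|$ inside the subgroup $P\rtimes(F_0\langle y\rangle)$. I would pass to a minimal $F_0\langle y\rangle$-invariant subgroup $P_0\le P$ on which $F_0$ still acts nontrivially, reducing to the case where $P_0$ is an elementary abelian $p$-group serving as an irreducible module for the essentially Frobenius group $F_0\rtimes\langle\bar y\rangle$ (with $\bar y$ the image of $y$ in its induced action on $F_0$). A Mazurov-type fixed-point theorem for Frobenius actions on modules then yields a nonzero vector of $P_0$ fixed by a power $y'$ of $y$ whose image in $G/N$ still has order $|C|$; multiplying this fixed $p$-element by $y'$ and, if necessary, raising to an appropriate power to strip any $p$-contamination from $|y'|$, produces an element of order exactly $p|C|$ in $G$. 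This last bookkeeping of orders, which is why the lemma is formulated with $(|F|,|N|)=1$ rather than the stronger $(|C|,|N|)=1$, is the delicate point of the argument.
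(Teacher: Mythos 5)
The paper does not prove this statement at all---it is Mazurov's lemma, quoted as a known preliminary result---so your attempt has to stand on its own. Your setup is fine: the Schur--Zassenhaus splitting $H=N\rtimes F_0$ of the preimage of $F$, the selection of an $F_0$-invariant Sylow $p$-subgroup $P$ of $N$ on which $F_0$ acts nontrivially, and the adjustment of a preimage $y$ of a generator of $C$ so that it normalizes $F_0$ and $P$ are all standard and correct. A first, repairable, slip is the reduction to an elementary abelian module: a minimal $F_0\langle y\rangle$-invariant subgroup of $P$ on which $F_0$ acts nontrivially need not be elementary abelian (minimality only forces $F_0$ to centralize its Frattini and derived subgroups, not the subgroup itself). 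The standard fix is to pass to the section $P/\Phi(P)$, on which $F_0$ still acts nontrivially by Burnside's coprime-action theorem, and to use that any element order occurring in a section of $G$ lies in $\pi_e(G)$ because $\pi_e(G)$ is closed under divisors.

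The genuine gap is the last step. You take a nonzero vector $v$ of the module fixed by a lift $y'$ of a generator of $C$ and multiply. If $p$ divides $|C|$---a case the hypothesis $(|F|,|N|)=1$ explicitly allows, e.g.\ $N$ a $2$-group with $|C|$ even, and exactly the ``delicate point'' you flag---then $\langle y',v\rangle$ is abelian of exponent $\mathrm{lcm}(|y'|,p)$, which equals $|C|$ when $|y'|=|C|$; no element of order $p|C|$ exists there, and ``stripping the $p$-contamination'' from $y'$ only lowers its order further. What the argument actually needs is the stronger form of the fixed-point theorem: for a Frobenius group $FC$ with cyclic complement $C=\langle c\rangle$ acting on a nonzero module $V$ in characteristic $p\nmid|F|$ with $C_V(F)=0$, the minimal polynomial of $c$ on $V$ is $x^{|C|}-1$, equivalently the norm map $T=1+c+\cdots+c^{|C|-1}$ is nonzero on $V$. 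Then for any preimage $g$ of $c$ in the (possibly non-split) extension $V.\langle c\rangle$ inside $G$ one has $(gn)^{|C|}=g^{|C|}\,T(n)$ as $n$ ranges over $V$; this ranges over a coset of the nontrivial subgroup $T(V)$, so some preimage $gn$ has $(gn)^{|C|}\neq 1$, and since $(gn)^{p|C|}=1$ its order is exactly $p|C|$. So the element of order $p|C|$ is a suitable preimage of the generator of $C$ itself, not a product of a lift with a commuting fixed vector; your mechanism only works under the stronger assumption $p\nmid|C|$.
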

\begin{lemma}\label{fro}
Let $G$ be a Frobenius group with kernel $F$ and complement
$C$. Then the following assertions hold:

(a) $F$ is a nilpotent group.

(b) $|F| \equiv 1 \pmod{|C|}$.

(c) Every subgroup of $C$ of order $pq$, with $p$, $q$ (not necessarily distinct)
primes, is cyclic. In particular, every Sylow subgroup of $C$ of odd order is
cyclic and a Sylow $2$-subgroup of $C$ is either cyclic or generalized quaternion
group. If $C$ is a non-solvable group, then $C$ has a subgroup of index at most
$2$ isomorphic to $SL(2,5) \times M$, where $M$ has cyclic Sylow $p$-subgroups and
$(|M|, 30) = 1$.
\end{lemma}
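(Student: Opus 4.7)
\textbf{Proof proposal for Lemma \ref{fro}.} This is the classical structural theorem for Frobenius groups (due to Frobenius, Thompson, Burnside, and Zassenhaus), so my plan is to assemble the standard ingredients rather than reprove everything from scratch; in a paper like this it will almost certainly be cited, but here is how I would sketch the argument.

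For part (a), the key fact is that $C$ acts by conjugation on $F$ with every non-identity element of $C$ fixing only $1\in F$. Thus $C$ acts as a group of fixed-point-free automorphisms on $F$. Picking an element of prime order in $C$ yields a fixed-point-free automorphism of prime order on $F$, and then I invoke Thompson's theorem, which says that any finite group admitting a fixed-point-free automorphism of prime order is nilpotent. For part (b), I would use that the same fixed-point-free action shows $C$ acts semiregularly on $F\setminus\{1\}$, so the orbits all have size $|C|$. Hence $|C|$ divides $|F|-1$, which gives $|F|\equiv 1\pmod{|C|}$.

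For part (c), the starting observation is that $C$ itself has the property that every non-identity element acts fixed-point-freely on $F$; equivalently, $C_G(c)\subseteq C$ for every $1\neq c\in C$. From this I would derive that every abelian subgroup of $C$ is cyclic: if $A\leq C$ were abelian and non-cyclic, I could pick $1\neq a\in A$ and note that $A\subseteq C_G(a)\subseteq C$, forcing a non-cyclic abelian subgroup inside a Frobenius complement, which by a direct counting/fixed-point argument is impossible. Once every abelian subgroup of $C$ is cyclic, the statement that every subgroup of order $pq$ is cyclic follows immediately (order $pq$ with $p\neq q$ forces cyclic; order $p^2$ likewise). The Sylow-structure statement is then Burnside's classical theorem: a finite group whose abelian subgroups are all cyclic has cyclic Sylow subgroups at odd primes and cyclic or generalized quaternion Sylow $2$-subgroup.

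For the non-solvable case of (c), I would invoke Zassenhaus's theorem on Frobenius complements: a non-solvable Frobenius complement has a normal subgroup of index at most $2$ isomorphic to $SL(2,5)\times M$, where $(|M|,30)=1$ and $M$ has cyclic Sylow subgroups. I would not try to reprove this from scratch; the proof uses the classification of finite groups all of whose proper subgroups are solvable together with the constraints from parts already established. The main obstacle, both conceptually and in a self-contained write-up, is precisely this last piece, so in practice I would just cite a standard reference (e.g.\ Passman's \emph{Permutation Groups} or Gorenstein's \emph{Finite Groups}) for the full statement of Lemma \ref{fro}.
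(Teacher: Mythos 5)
The paper offers no proof of Lemma~\ref{fro} at all: it is quoted as a standard preliminary fact about Frobenius groups (the usual sources being Passman's \emph{Permutation Groups} or Gorenstein's \emph{Finite Groups}), so your decision to assemble the classical ingredients and ultimately cite a reference is exactly the right posture. Your treatments of (a) and (b) are correct: the complement acts fixed-point-freely on $F$, Thompson's theorem gives nilpotence, and semiregularity of the conjugation action on $F\setminus\{1\}$ gives $|C|$ dividing $|F|-1$. The appeal to Zassenhaus for the non-solvable case is also the standard (and essentially unavoidable) move.

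There is, however, one step in your sketch of (c) that fails as written. You claim that once every abelian subgroup of $C$ is cyclic, the statement ``every subgroup of order $pq$ is cyclic'' follows immediately, including the case $p\neq q$. That implication is false: $S_3$ has order $2\cdot 3$, every abelian subgroup of $S_3$ is cyclic, and $S_3$ is not cyclic. Cyclicity of abelian subgroups handles only the case $p=q$ (a group of order $p^2$ is abelian) and, via Burnside, the Sylow structure. The case $p\neq q$ is a genuinely separate assertion about Frobenius complements; the standard proof passes to an elementary abelian section $V$ of $F$ on which the order-$pq$ subgroup $H\leq C$ acts with no nonzero fixed vectors for nonidentity elements, and then shows that such a fixed-point-free linear group of order $pq$ must be cyclic (e.g.\ by a trace or element-counting argument ruling out the nonabelian semidirect product $Z_q\rtimes Z_p$). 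Relatedly, your argument that abelian subgroups of $C$ are cyclic is circular as phrased (``a non-cyclic abelian subgroup inside a Frobenius complement \dots is impossible''); the actual content is that $Z_p\times Z_p$ cannot act coprimely and fixed-point-freely on a nontrivial group, since $F$ is generated by the centralizers of the index-$p$ subgroups. Both points are repaired by the reference you already intend to cite, but as a self-contained sketch the $p\neq q$ case is a real gap, not an immediate corollary.
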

By using \cite[Theorem A]{21} we have the following result:
\begin{lemma}\label{wil}
Let $G$ be a finite group with $t(G)\geq 2$. Then
one of the following holds:

(a) $G$ is a Frobenius or 2-Frobenius group;

(b) there exists a nonabelian simple group $S$ such that $S\leq$
$\overline{G}$$:=G/N \leq \textrm{Aut}(S)$ for some nilpotent normal
$\pi_1$-subgroup $N$ of $G$ and $\overline{G}/S$ is a $\pi_1$-group.
\end{lemma}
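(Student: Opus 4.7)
The plan is to follow Williams' original strategy, bifurcating on the solvability of $G$. Let $\pi_1$ denote the connected component of $\Gamma(G)$ containing $2$, and let $\pi'=\pi(G)\setminus \pi_1$; by hypothesis $\pi'$ is nonempty, and no prime of $\pi_1$ is joined to any prime of $\pi'$, so $G$ contains no element whose order involves primes from both sets. This absence of mixed-order elements is the engine of the whole argument.

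In the solvable case I would invoke Hall's theorem to extract a Hall $\pi_1$-subgroup $K$ and a Hall $\pi'$-subgroup $C$. Since no element has mixed order, $C$ acts fixed-point-freely on appropriate chief factors of $K$, and a careful induction on $|G|$ using the Fitting subgroup together with Hall--Higman style analysis of coprime action forces $G$ to be either a Frobenius group with kernel $K$ and complement $C$, or a $2$-Frobenius group in which a normal Frobenius subgroup and its complement stack to give the required iterated structure. This yields alternative (a).

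In the non-solvable case let $N$ be the maximal normal solvable subgroup of $G$ and set $\ov{G}=G/N$. A minimal normal subgroup of $\ov{G}$ is a direct product of isomorphic nonabelian simple groups, and a short case analysis exploiting disconnectedness (distinct simple factors commute, so prime-order elements drawn from two different factors would join primes lying in different components of $\Gamma(G)$) forces the socle of $\ov{G}$ to consist of a single nonabelian simple group $S$. Hence $S\leq \ov{G}\leq \textrm{Aut}(S)$, which is the backbone of alternative (b).

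The main obstacle is showing that $N$ is a nilpotent $\pi_1$-subgroup and that $\ov{G}/S$ is a $\pi_1$-group. Assuming some $p\in\pi'$ divides $|N|$, I would apply the Frattini argument to a coprime action on a Sylow $p$-subgroup of $N$ and combine it with an element coming from a component of $S$ different from the one containing $p$; this produces an element whose order mixes $\pi_1$ and $\pi'$, contradicting disconnection. Iterating this over primes in $\pi'$ confines $\pi(N)$ to $\pi_1$, after which a standard Fitting-subgroup and coprime-action argument yields $N=F(N)$ and hence nilpotency. The same mixed-order technique, applied to a prime of $\ov{G}/S$ outside $\pi_1$, confines $\pi(\ov{G}/S)$ to $\pi_1$ and completes alternative (b).
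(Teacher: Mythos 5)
The paper does not prove this lemma at all: it is the Gruenberg--Kegel structure theorem in the refined form due to Williams and Kondrat'ev, and the paper simply quotes it from \cite{21} (``Theorem A''). So there is no internal proof to compare against; your proposal is an attempt to reconstruct a substantial theorem from the literature, and as a reconstruction it correctly identifies the classical skeleton (split on solvability; Hall subgroups and fixed-point-free actions in the solvable case; solvable radical and socle analysis in the non-solvable case) but leaves the genuinely hard steps as assertions.

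Concretely: (i) In the solvable case, the statement that Hall--Higman style analysis ``forces'' the Frobenius or $2$-Frobenius conclusion is precisely the content of the Gruenberg--Kegel theorem for solvable groups; the proof needs the specific facts that $C_G(F(G))\leq F(G)$, that a Hall $\pi'$-subgroup acts fixed-point-freely on the relevant section (because mixed-order elements are excluded), and crucially that the prime graph of a Frobenius complement is complete (Lemma~\ref{fro}(c)), which is what caps the tower at height two and yields exactly the Frobenius/$2$-Frobenius dichotomy. None of this appears in your sketch. (ii) Your argument that the socle of $\overline{G}$ is a single simple group does not work as stated: commuting elements from two distinct simple factors only produce edges among the primes of those factors, and this is a contradiction only after you have shown that some prime of a component $\pi_i$ with $i\geq 2$ actually divides the order of the socle. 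That localization step is the heart of Williams' argument (via the isolated nilpotent Hall $\pi_i$-subgroups) and is missing. (iii) The step ``an element coming from a component of $S$ different from the one containing $p$'' is not meaningful at that point --- $S$ is a single simple group and $p\in\pi'$ is not attached to a component of $S$. The standard argument that no $\pi'$-prime divides $|N|$ combines the Frattini argument with the fact that a nonabelian simple group cannot act fixed-point-freely on a nontrivial group (it would be a Frobenius complement, contradicting Lemma~\ref{fro}(c) and Zassenhaus' classification). (iv) Nilpotency of $N$ is not a ``standard Fitting-subgroup argument'': it follows because a $q$-element with $q\in\pi_i$, $i\geq 2$, acts fixed-point-freely on $N$, so one must invoke Thompson's theorem on fixed-point-free automorphisms of prime order. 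Since these are exactly the points where the theorem is nontrivial, the proposal as written has genuine gaps; for the purposes of this paper the honest course is to cite \cite{21} (or Williams \cite{34}) as the author does.
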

\section{Main Results}
\begin{lemma}\label{mainl1}
There exists a Frobenius group $G=K:C$, where $K$ is an abelian $3$-group and $\pi(C)=\{2,5\}$,
such that $\ga(G)=\ga({\rm PGL}(2,9))$.
\end{lemma}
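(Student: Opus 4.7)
The plan is to first pin down $\Gamma({\rm PGL}(2,9))$ explicitly and then construct the required Frobenius group as an affine semilinear group over a finite field. The group ${\rm PGL}(2,9)$ has order $720=2^4\cdot 3^2\cdot 5$, and its element orders are controlled by the standard Singer/parabolic torus structure: orders in the cyclic tori of sizes $q-1=8$ and $q+1=10$, together with $p=3$. Hence $\pi_e({\rm PGL}(2,9))=\{1,2,3,4,5,8,10\}$, so $\Gamma({\rm PGL}(2,9))$ has vertex set $\{2,3,5\}$ with a single edge $2\sim 5$ (coming from elements of order $10$) and $3$ isolated. Any Frobenius group $G=K{:}C$ with $K$ an abelian $3$-group and $\pi(C)=\{2,5\}$ already forces the vertex set of $\Gamma(G)$ to be $\{2,3,5\}$; moreover, every element of $G$ is either in $K$ (so of $3$-power order) or conjugate into $C$ (so of order dividing $|C|$), which automatically isolates $3$ from $\{2,5\}$. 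Therefore all one has to arrange is an edge $2\sim 5$, i.e.\ an element of order $10$ in $C$.

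The natural way to secure such a $C$ is to take it cyclic of order $10$. By Lemma~\ref{fro}(b) this requires $|K|\equiv 1\pmod{10}$, and the smallest $3$-power satisfying this is $3^{4}=81$. I would therefore set $K=(\mathbb{F}_{81},+)$, regarded as an elementary abelian $3$-group of order $81$, and let $C$ be the unique cyclic subgroup of order $10$ inside the cyclic multiplicative group $\mathbb{F}_{81}^{\ast}$ of order $80$; then $G:=K{:}C$ with $C$ acting on $K$ by field multiplication.

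The verification splits into two easy steps. First, the action of $C\setminus\{1\}$ on $K\setminus\{0\}$ is fixed-point-free, because any $c\in\mathbb{F}_{81}^{\ast}\setminus\{1\}$ satisfies $cx=x$ iff $x=0$; hence $G$ is genuinely a Frobenius group with kernel $K$ and complement $C$. Second, every non-identity element of $G$ lies in $K$ or in a conjugate of $C$, so $\pi_e(G)\subseteq\{1,3\}\cup\{1,2,5,10\}$. Thus the only edge in $\Gamma(G)$ is $2\sim 5$ (realised by a generator of $C$), and $3$ is isolated, giving $\Gamma(G)=\Gamma({\rm PGL}(2,9))$.

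There is really no serious obstacle here; the whole argument is a construction rather than a classification. The only place where something could go wrong is the numerical matching, i.e.\ finding an abelian $3$-group $K$ whose automorphism group contains a fixed-point-free cyclic subgroup of order $10$, and the congruence $3^{4}\equiv 1\pmod{10}$ settles this at once via the field model $\mathbb{F}_{81}^{\ast}$.
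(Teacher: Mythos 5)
Your proof is correct and follows essentially the same route as the paper: the affine construction over $\mathbb{F}_{81}$ with a subgroup of the multiplicative group acting by field multiplication, giving a Frobenius group with elementary abelian kernel of order $3^4$ and a cyclic complement containing an element of order $10$. The only (immaterial) difference is that you take the complement to be the cyclic subgroup of order $10$ in $\mathbb{F}_{81}^{\ast}$, whereas the paper uses the full multiplicative group of order $80$; both satisfy $\pi(C)=\{2,5\}$ and yield the required prime graph.
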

\begin{proof}
Let $F$ be a finite field with $3^4$ elements. Also let $V$ be the additive group of $F$ and $H$ be the multiplicative group $F\setminus\{0\}$. We know that $H$ acts on $V$ by right product. So $G:=V\rtimes H$ is a finite group such that $\pi(G)=\{2,3,5\}$, since $|V|=3^4$ and $|H|=80$. On the other hand $H$ acts fixed point freely on $V$, so $G$ is a Frobenius group with kernel $V$ and complement $H$. Since the multiplicative group $F\setminus\{0\}$ is cyclic, $H$ is cyclic too. Therefore, $G$ has an element of order $10$, and so the prime graph of $G$ consists just one edge, which is the edge between $2$ and $5$. This implies that $\ga(G)=\ga({\rm PGL}(2,9))$, as desired.
\end{proof}
\begin{lemma}\label{mainl2}
There exists a Frobenius group $G=K:C$, where $\pi(K)=\{2,5\}$ and $C$ is a cyclic $3$-group
such that $\ga(G)=\ga({\rm PGL}(2,9))$.
\end{lemma}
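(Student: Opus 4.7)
\medskip
\noindent\textbf{Proof proposal.} The plan is to mimic the construction in Lemma~\ref{mainl1}, but since the Frobenius kernel must have $\pi(K)=\{2,5\}$ while a single finite field has prime power order, I will build $K$ as a direct sum of two field-addition groups and let $C$ act diagonally through field multiplication on each summand. Concretely, let $V_1$ be the additive group of the field $F_1:=\mathbb{F}_{2^2}$ and $V_2$ the additive group of $F_2:=\mathbb{F}_{5^2}$, and put $K:=V_1\oplus V_2$. Since $|F_1^\times|=3$ and $|F_2^\times|=24$, each multiplicative group contains an element of order $3$; pick $c_1\in F_1^\times$ and $c_2\in F_2^\times$ of order $3$ and let $C:=\langle c\rangle$ where $c=(c_1,c_2)$ acts on $K$ diagonally, i.e.\ $c\cdot(v_1,v_2)=(c_1v_1,c_2v_2)$. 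Then $C\cong \mathbb{Z}/3\mathbb{Z}$ is a cyclic $3$-group and $K$ is abelian (hence nilpotent) with $\pi(K)=\{2,5\}$.

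Next I would verify that $G:=K\rtimes C$ is a Frobenius group with kernel $K$ and complement $C$. The key point is fixed-point-freeness: if $c\cdot(v_1,v_2)=(v_1,v_2)$, then $(c_i-1)v_i=0$ in $F_i$ for $i=1,2$, and since $c_i\ne1$ in a field, $v_i=0$. The same argument applies to every nontrivial power of $c$, since $c_i$ has order exactly $3$ in $F_i^\times$. Thus the action of $C$ on $K\setminus\{0\}$ is free, which is exactly the Frobenius condition.

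Finally I would read off the prime graph. Because $K=V_1\oplus V_2$ is abelian and contains elements of orders $2$ and $5$, it contains an element of order $10$, so $2\sim 5$ in $\ga(G)$. By the Frobenius structure every element of $G$ lies either in $K$ or in a conjugate of $C$, so its order divides $|K|=2^2\cdot 5^2$ or $|C|=3$. In particular no element has order divisible by $2\cdot 3$ or by $3\cdot 5$, so $3$ is an isolated vertex of $\ga(G)$. Therefore $\ga(G)$ has vertex set $\{2,3,5\}$ with the single edge $\{2,5\}$, which coincides with $\ga({\rm PGL}(2,9))$ as recalled in the proof of Lemma~\ref{mainl1}.

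The main obstacle is purely conceptual rather than computational: recognizing that the single-field recipe of Lemma~\ref{mainl1} cannot be ported verbatim (since the kernel must see two distinct primes), and realizing that the minimal workaround is to split the kernel into a $2$-part and a $5$-part and use the existence of an order-$3$ element in each of $\mathbb{F}_4^\times$ and $\mathbb{F}_{25}^\times$ to produce a single diagonal fixed-point-free automorphism. Once the construction is set up, the verification of the Frobenius property and the prime graph is routine.
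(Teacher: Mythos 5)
Your proposal is correct, and in fact it repairs a genuine defect in the paper's own argument. The paper also takes $V=F_1\times F_2$ with $F_1=\mathbb{F}_{4}$, $F_2=\mathbb{F}_{25}$, but it lets the \emph{full} multiplicative groups act, setting $H:=(F_1\setminus\{0\})\times(F_2\setminus\{0\})$. That $H$ has order $3\cdot 24=72$, so $\pi(H)=\{2,3\}$ rather than $\{3\}$ (contradicting the statement that $C$ is a $3$-group), and worse, the action is not fixed-point-free: an element of the form $(1,h')$ with $h'\neq 1$ fixes every vector $(g,0)$, so $V\rtimes H$ is not Frobenius with kernel $V$. Your modification --- choosing $c_1\in\mathbb{F}_4^\times$ and $c_2\in\mathbb{F}_{25}^\times$ each of order exactly $3$ and letting the single diagonal element $c=(c_1,c_2)$ generate $C\cong\mathbb{Z}/3\mathbb{Z}$ --- is precisely what is needed: every nontrivial power of $c$ has both coordinates different from $1$, so the action on $K\setminus\{0\}$ is free, $C$ is a cyclic $3$-group, and the rest of your verification (the element of order $10$ inside the abelian kernel, the isolation of $3$ via the partition of $G$ into $K$ and conjugates of $C$) is routine and sound. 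So the two arguments share the same skeleton, but yours is the one that actually proves the lemma.
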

\begin{proof}
Let $F_1$ and $F_2$ be two fields with $2^2$ and $5^2$ elements, respectively. Let $V:=F_1\times F_2$ be the direct product of the additive groups $F_1$ and $F_2$. Also let $H:=H_1\times H_2$, be the direct product of $H_1$ and $H_2$, which are the multiplicative groups $F_1\setminus\{0\}$ and $F_2\setminus\{0\}$, respectively. We know that $H_i$ acts fixed point freely on $F_i$, where $1\leq i\leq2$. So we define an action of $H$ on $V$ as follows: for each $(h,h')\in H$ and $(g,g')\in V$, we define $(g,g')^{(h,h')}:=(hg,h'g')$. It is clear that this definition is well defined. So we may construct a finite group $G=V\rtimes H$. On the other hand $H$ acts fixed point freely on $V$. So $G$ is a Frobenius group with kernel $V$ and complement $H$ such that $\pi(V)=\{2,5\}$ and $\pi(H)=\{3\}$. Finally, since $V$ is nilpotent, we get that $\ga(G)$ contains an edge between $2$ and $5$, so $\ga(G)=\ga({\rm PGL}(2,9))$.
\end{proof}
\begin{lemma}\label{mainl3}
There exists a $2$-Frobenius group $G$ with normal series $\se$, such that $\pi(H)=\{5\}$, $\pi(G/K)=\{2\}$ and $\pi(K/H)=\{3\}$,
such that $\ga(G)=\ga({\rm PGL}(2,9))$.
\end{lemma}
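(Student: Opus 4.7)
The plan is to realize $G$ as a semidirect product $H \rtimes L$, where $H$ is the additive group of the field $\mathbb{F}_{25}$ and $L \cong S_3$ is embedded in $\mathrm{Aut}(H) = \mathrm{GL}(2,5)$ so that the element of order $3$ in $L$ acts fixed-point-freely on $H$, while the involutions of $L$ have nonzero fixed points on $H$. The first property will make the preimage $K \lhd G$ of the Sylow $3$-subgroup of $L$ a Frobenius group with kernel $H$; combined with the fact that $G/H \cong S_3$ is itself a Frobenius group, this gives the required $2$-Frobenius structure $\se$ with $\pi(H)=\{5\}$, $\pi(K/H)=\{3\}$, $\pi(G/K)=\{2\}$. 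The second property will supply an element of order $10$, hence the edge $2\sim 5$; the absence of the other possible edges will drop out of the structure of $S_3$ together with the fixed-point-free action of the $3$-element.

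\textbf{Construction and structure.} Let $\al$ generate the cyclic group $\mathbb{F}_{25}^\times$ of order $24$, write $H:=(\mathbb{F}_{25},+)$, and let $\sigma\in\mathrm{Aut}(H)$ denote multiplication by $\al^8$, so $\sigma$ has order $3$. Let $\tau:H\to H$ be the Frobenius map $x\mapsto x^5$, an involution of $H$. A short calculation using $(\al^8)^5 = \al^{40} = \al^{16} = (\al^8)^{-1}$ shows $\tau\sigma\tau^{-1} = \sigma^{-1}$, so $L:=\langle\sigma,\tau\rangle\cong S_3$. Put $G:=H\rtimes L$ and let $K\lhd G$ be the preimage of $\langle\sigma\rangle$ under $G\to L$; $K$ is normal since $\langle\sigma\rangle\lhd S_3$. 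Because $\sigma$ is multiplication by a nonzero field element, it fixes only $0\in H$, so $K$ is Frobenius with kernel $H$ and complement $\langle\sigma\rangle$. Together with $G/H\cong S_3$ being Frobenius, this establishes the $2$-Frobenius structure with the prescribed prime-subset data.

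\textbf{Computing the prime graph.} For $g\in L$ of order $n$ and $v\in H$, one has $(v,g)^n = \bigl((1+g+\cdots+g^{n-1})v,\,1\bigr)$. When $g=\sigma$ the relevant sum $1+\al^8+\al^{16}$ vanishes, since $\al^8$ is a primitive cube root of unity; hence $(v,\sigma)^3=1$ for every $v$, so there is no element of order $15$ and $3\not\sim 5$. When $g=\tau$ and $v\in\mathbb{F}_5^\times\sub H$, one has $v^5=v$, so $(v,\tau)^2=(2v,1)$, which has order $5$; hence $(v,\tau)$ has order $10$ and $2\sim 5$. Since $S_3$ contains no element of order $6$, neither does $G$, so $2\not\sim 3$. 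Thus $\ga(G) = \ga(\mathrm{PGL}(2,9))$. The one delicate point in the argument is the choice of $\tau$: it must invert $\sigma$ yet must not act fixed-point-freely on $H$. The naive involution $-\mathrm{id}$ inverts $\sigma$ but has no nonzero fixed point, so would give only the Frobenius (not $2$-Frobenius) graph; the Frobenius automorphism of $\mathbb{F}_{25}/\mathbb{F}_5$ is chosen precisely because its fixed field $\mathbb{F}_5$ is nontrivial while simultaneously $(\al^8)^5=(\al^8)^{-1}$.
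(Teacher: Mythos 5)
Your construction is correct, and it follows the same outline as the paper's proof: take $H$ to be the additive group of $\mathbb{F}_{25}$, let an element of multiplicative order $3$ (namely $\al^8$) act by field multiplication to get the lower Frobenius layer, and extend by an involution to get the top layer. The important difference is in how the involution acts, and here your version is actually the sound one. The paper builds an abstract $S_3$ as $\langle\beta\rangle{:}\langle\gamma\rangle$ and then decrees $v^{\beta^x\gamma^y}:=\beta^x v$, i.e.\ $\gamma$ acts trivially on $V$; this is not a well-defined action, since a homomorphism $T\to\mathrm{Sym}(V)$ killing $\gamma$ must kill the whole normal closure of $\gamma$, which is all of $S_3$ (equivalently, $\gamma\beta\gamma^{-1}=\beta^{-1}$ would have to act both as multiplication by $\beta$ and by $\beta^{-1}$). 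The paper also never exhibits the element of order $10$ needed for the edge $2\sim 5$. Your choice of $\tau$ as the Frobenius map $x\mapsto x^5$ repairs exactly this: it genuinely inverts $\sigma$ inside $\mathrm{GL}(2,5)$ because $(\al^8)^5=\al^{16}=(\al^8)^{-1}$, so $\langle\sigma,\tau\rangle\cong S_3$ really acts on $H$, and its nontrivial fixed field $\mathbb{F}_5$ supplies the elements of order $10$. Your verification of the graph is complete (the $1+\sigma+\sigma^2=0$ computation for $3\not\sim 5$, the order-$10$ element for $2\sim 5$, and $2\not\sim 3$ because a cyclic subgroup of order $6$ would meet $H$ trivially and embed in $S_3$). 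One incidental slip in your closing remark: $-\mathrm{id}$ is central in $\mathrm{GL}(2,5)$, so it centralizes $\sigma$ rather than inverting it ($\langle\sigma,-\mathrm{id}\rangle\cong Z_6$, not $S_3$); this does not affect the proof, which never uses $-\mathrm{id}$.
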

\begin{proof}
Let $F$ be a field with $5^2$ elements and $V$ be its additive group. We know that $F=\{0,1,\alpha, \alpha^2,\dots, \alpha^{23}\}$, where $\alpha$ is a generator of the multiplicative group $F\setminus\{0\}$. Hence $|\alpha|=24$, and so $\beta:=\alpha^{8}$ has order $3$.
Also $\langle \beta\rangle\cong Z_3$ and ${\rm Aut}(Z_3)\cong Z_2$. This argument implies that we may construct a Frobenius group $T:=\langle \beta\rangle:\langle\gamma\rangle$, where $\gamma$ is an involution.

Now we define an action of $T$ on $V$ as follows: for each $\beta^x\gamma^y\in T$ and $v\in V$,
$v^{\beta^x\gamma^y}:=\beta^x v$, where $1\leq x\leq3$ and $1\leq y\leq2$. Therefore $G:=V:T$ is a $2$ Frobenius group with desired properties.
\end{proof}
\begin{theorem}\label{th1}
Let $G$ be a finite group. Then $\ga(G)=\ga({\rm PGL}(2,3^2))$ if and only if $G$ is isomorphic to one of the following groups:

(1) A Frobenius group $K:C$, where $K$ is an abelian $3$-group and $\pi(C)=\{2,5\}$,

(2) A Frobenius group $K:C$, where $\pi(K)=\{2,5\}$ and $C$ is a cyclic $3$-group,

(3) A $2$-Frobenius group with normal series $\se$, such that $\pi(H)\subseteq\{2,5\}$, $\pi(G/K)=\{2\}$ and $\pi(K/H)=\{3\}$,

(4) Almost simple group ${\rm PGL}(2,3^2)$.
\end{theorem}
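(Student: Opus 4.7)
The approach is to verify both directions of the biconditional. For the ``if'' direction, Lemmas~\ref{mainl1}, \ref{mainl2}, and \ref{mainl3} provide constructions for types (1), (2), and the case $\pi(H)=\{5\}$ of type (3); I would cover the remaining subcases $\pi(H)=\{2\}$ and $\pi(H)=\{2,5\}$ by analogous Frobenius--inside--nilpotent constructions, and (4) is immediate. The preliminary computation I will record is that $\mu({\rm PGL}(2,9))=\{3,8,10\}$, so $\ga({\rm PGL}(2,9))$ has vertex set $\{2,3,5\}$, sole edge $2\sim 5$, and isolated vertex $3$; in particular $t({\rm PGL}(2,9))=2$.

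For the converse, assume $\ga(G)=\ga({\rm PGL}(2,9))$ and apply Lemma~\ref{wil}. First suppose $G$ is a Frobenius group with kernel $K$ and complement $C$. Lemma~\ref{fro}(a) gives $K$ nilpotent, so the induced subgraph on $\pi(K)$ is a clique, forcing $\pi(K)\in\{\{3\},\{2\},\{5\},\{2,5\}\}$. The cases $\pi(K)\in\{\{2\},\{5\}\}$ are ruled out via Lemma~\ref{fro}(c): the complement $C$ would then contain both $3$ and one of $\{2,5\}$, its subgroup of order $3p$ is cyclic, and the resulting element of order $3p$ contradicts the isolation of $3$. The surviving cases $\pi(K)=\{3\}$ and $\pi(K)=\{2,5\}$ yield types (1) and (2), with the cyclic/nilpotent structure of $C$ read off from Lemma~\ref{fro}. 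If instead $G$ is $2$-Frobenius with series $\se$, the standard structure theorem for $2$-Frobenius groups shows that $\pi(K/H)$ is a full connected component of $\ga(G)$, forcing $\pi(K/H)=\{3\}$; then $G/K$ embeds into ${\rm Aut}(K/H)$, and Lemma~\ref{fro}(c) combined with the edge structure pins $\pi(G/K)=\{2\}$ and $\pi(H)\subseteq\{2,5\}$, giving type (3).

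The hard part is the non-solvable case of Lemma~\ref{wil}(b): there exists a nonabelian simple $S$ with $S\leq \ov{G}:=G/N\leq {\rm Aut}(S)$, where $N$ is a nilpotent normal $\{2,5\}$-subgroup and $\ov{G}/S$ is a $\{2,5\}$-group. Since $\pi(S)\subseteq\{2,3,5\}$ and $3\in\pi(S)$, Herzog's classification of simple $K_3$-groups forces $S\in\{A_5,A_6\}$. I would eliminate $S\cong A_5$ by applying Lemma~\ref{maza} to a Frobenius subgroup $D_{10}\leq \ov{G}$ acting coprimely on a minimal normal subgroup of $N$: this produces an element whose order witnesses a forbidden edge incident to $3$; the delicate point is splitting on whether that minimal subgroup is a $2$- or a $5$-group and verifying the non-centralization hypothesis of Lemma~\ref{maza}. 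With $S\cong A_6$ forced, I then run through the overgroups $A_6\leq \ov{G}\leq {\rm Aut}(A_6)$ and eliminate all but ${\rm PGL}(2,9)$ by their element orders: $A_6$ and $M_{10}$ have no element of order $10$ (missing the edge $2\sim 5$), while $S_6$ and $P\Gamma L(2,9)$ contain an element of order $6$ (violating the isolation of $3$). A final application of Lemma~\ref{maza}, now using a Frobenius subgroup of ${\rm PGL}(2,9)$ of shape $\mathbb{Z}_3{:}\mathbb{Z}_8$ or $\mathbb{Z}_5{:}\mathbb{Z}_4$ acting on a minimal subgroup of $N$, rules out $N\neq 1$ and concludes $G\cong{\rm PGL}(2,9)$.
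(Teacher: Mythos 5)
Your skeleton (Kondrat'ev's trichotomy, then Frobenius, $2$-Frobenius, and almost-simple cases) matches the paper's, and your handling of the Frobenius and $2$-Frobenius cases is sound --- in fact more detailed than the paper's own Case 3. The genuine problems are all in the non-solvable case. First, the list of nonabelian simple groups $S$ with $\pi(S)\subseteq\{2,3,5\}$ is $A_5$, $A_6$ \emph{and} ${\rm PSU}(4,2)$, not just the two alternating groups; the paper cites this classification and then disposes of ${\rm PSU}(4,2)$ by the forbidden edge $2\sim 3$. Your claim that the classification ``forces $S\in\{A_5,A_6\}$'' skips a case that must at least be named and killed.

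Second and more seriously, both of your applications of Lemma~\ref{maza} are made with the wrong Frobenius subgroups and would not produce contradictions. To eliminate $A_5$ you propose $D_{10}=5{:}2$; but Lemma~\ref{maza} outputs $p\,|C|\in\pi_e(G)$ with $p\in\pi(N)\subseteq\{2,5\}$ and $|C|=2$, i.e.\ an element of order $4$ or $10$ --- neither touches the vertex $3$, so no forbidden edge appears. The paper instead uses $A_4=2^2{:}3\leq A_5$, whose complement has order $3$, acting on the $5$-part of $N$, yielding $15\in\pi_e(G)$ and the forbidden edge $3\sim 5$ (and note that the subcase where $N$ is a $2$-group still needs a separate argument, since then $(|2^2|,|N|)\neq 1$). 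Likewise, in the final step your proposed subgroups are defective: $\mathbb{Z}_3{:}\mathbb{Z}_8$ is not a Frobenius group at all (${\rm Aut}(\mathbb{Z}_3)\cong \mathbb{Z}_2$; the actual Frobenius subgroup of ${\rm PGL}(2,9)$ is $3^2{:}8$), and $\mathbb{Z}_5{:}\mathbb{Z}_4$ is not a subgroup of ${\rm PGL}(2,9)$ (the normalizer of a Sylow $5$-subgroup is dihedral of order $20$, with no element of order $4$ acting faithfully). Even granting them, Lemma~\ref{maza} would only give element orders $8r$ or $4r$ with $r\in\{2,5\}$, none of which violates $\ga(G)$ since $2\sim 5$ is the allowed edge. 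The paper's actual argument is different and does work: after reducing to $F$ an elementary abelian $r$-group with $r\neq 3$, the isolation of $3$ forces the non-cyclic Sylow $3$-subgroup $3^2$ of ${\rm PGL}(2,9)$ to act fixed-point-freely on $F$, so $F{:}3^2$ is a Frobenius group whose complement is a non-cyclic group of odd order, contradicting Lemma~\ref{fro}(c). You should replace both of your Lemma~\ref{maza} steps accordingly.
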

\begin{proof}
Throughout the proof, we assume that $G$ is a finite group with the same prime graph as the almost simple group
${\rm PGL}(2,3^2)$. First we note that by \cite[Lemma 7]{mogh-shi}, we have:
$$\mu({\rm PGL}(2,9))=\{3, 8, 10\}.$$

Hence in $\ga({\rm PGL}(2,3^2))$ (and so in $\ga(G)$), there is only one edge which is the edge between $2$ and $5$ and so $3$ is an isolated vertex. This implies that $\ga(G)$ has two connected components $\{3\}$ and $\{2,5\}$. Thus by Lemma~\ref{wil}, we get that $G$ is a Frobenius group or $2$-Frobenius group or there exists a nonabelian simple group $S$ such that $S\leq G/Fit(G)\leq Aut(S)$. We consider each possibility for $G$.

Let $G$ be a Frobenius group with kernel $K$ and complement $C$. We know that $K$ is nilpotent and $C$ is a connected component of the prime graph of $G$. Also by the above description, $3$ is not adjacent to $2$ and $5$ in $\ga(G)$. This shows that either $\pi(K)=\{3\}$ or $\pi(C)=\{3\}$. We consider these cases, separately.

{\bf Case 1.} Let $\pi(K)=\{3\}$. Hence the order of complement $C$, is even and so $K$ is an abelain subgroup of $G$.
Also by the above description, $\pi(C)=\{2,5\}$. Since $C$ is a connected component of $\ga(G)$, there is and edge between $2$ and $5$. So it follows that $\ga(G)=\ga({\rm PGL}(2,9))$, which implies groups satisfying in (1).

{\bf Case 2.} Let $\pi(C)=\{3\}$. Hence $\pi(K)=\{2,5\}$. Since $K$ is nilpotent, we get that $2$ and $5$ are adjacent in $\ga(G)$. This means $\ga(G)=\ga({\rm PGL}(2,9))$ and so we get (2).

{\bf Case 3.} Let $G$ be a $2$-Frobenius group with normal series $\se$. Since $\pi(K/H)$ and $\pi(H)\cup\pi(G/K)$ are the connected components of $\ga(G)$, we get that $\pi(K/H)=\{3\}$ and $\pi(H)\cup\pi(G/K)=\{2,5\}$. This implies (3).

{\bf Case 4.} Let there exist a nonabelian simple group $S$, such that $S\leq \bar{G}:=G/{\rm Fit}(G)\leq {\rm Aut}(S)$. Since
$\pi(S)\subseteq\pi(G)$, $\pi(S)=\{2,3,5\}$. The finite simple groups with this property are classified in \cite[Table 8]{mogh}. So we get that $S$ is isomorphic to one of the simple groups $A_5$, ${\rm PSU}(4,2)$ and ${\rm PSL}(2,9)$($\cong A_6$).

{\bf Subcase 4.1.} Let $S\cong A_5$. We know that ${\rm Aut}(A_5)=S_5$. So $\bar{G}$ is isomorphic to the alternating group $A_5$ or the symmetric group $S_5$. Since in the prime graph of $S_5$, there is an edge between $2$ and $3$, hence we get that $\bar{G}$ is not isomorphic to $S_5$. Thus, $G/{\rm Fit(G)}=A_5$.
In the prime graph of $A_5$, $2$ and $5$ are nonadjacent. Then at least one of the prime numbers $2$ or $5$, belongs to $\pi({\rm Fit}(G))$.

Let $5\in\pi({\rm Fit}(G))$. Let $F_5$ be a Sylow $5$-subgroup of ${\rm Fit}(G)$. Since $F_5$ is a characteristic subgroup of ${\rm Fit}(G)$ and ${\rm Fit}(G)$ is a normal subgroup of $G$, $F_5\unlhd G$.
On the other hand in alternating group $A_5$, the subgroup $\langle (12)(34), (13)(24)\rangle:\langle(123)\rangle$ is a Frobenius subgroup isomorphic to $2^2:3$. We recall that by the previous argument, $F_5\unlhd G$, and so $G$ has a subgroup isomorphic to $5^{\alpha}:(2^2:3)$. So by Lemma~\ref{maza}, we get that $3$ is adjacent to $5$, a contradiction.

{\bf Subcase 4.2.} Let $S\cong {\rm PSU}(4,2)$. By \cite{spec1}, there is an edge between $3$ and $2$ which is a contradiction.

{\bf Subcase 4.3.} Let $S\cong {\rm PSL}(2,9)$. Then $\bar{G}$ is isomorphic to ${\rm PSL}(2,9)$ or ${\rm PSL}(2,9):\langle\theta\rangle$, where $\theta$ is a diagonal, field or diagonal-field automorphism of ${\rm PSL}(2,9)$. In particular $\theta$ is an involution. If $\theta$ is a field or diagonal-field automorphism of ${\rm PSL}(2,9)$, then the semidirect product ${\rm PSL}(2,9):\langle\theta\rangle$ contains an element of order $6$. Hence $\theta$ is neither a field automorphism nor a diagonal-field automorphism. Therefore $\theta$ is a diagonal automorphism and so
$\bar{G}\cong {\rm PGL}(2,9)$.

By the above discussion, $G/{\rm Fit}(G)\cong {\rm PGL}(2,9)$. It is enough to prove that ${\rm Fit}(G)=1$. On the contrary, let $r\in\pi({\rm Fit}(G))$. Also let $F_r$ be the Sylow $r$-subgroup of ${\rm Fit}(G)$. Since ${\rm Fit}(G)$ is nilpotent,
we can write ${\rm Fit}(G)=O_{r'}({\rm Fit}(G))\times F_r$. So if we put $\tilde{G}=G/O_{r'}({\rm Fit}(G))$, then we get that:
$${\rm PGL}(2,9)\cong \frac{G}{{\rm Fit}(G)}\cong\frac{\tilde{G}}{F_r}\cong \frac{\tilde{G}/\Phi(F_r)}{F_r/\Phi(F_r)}.$$
Since $F_r/\Phi(F_r)$ is an elementary abelian group, without loose of generality we may assume that $F:={\rm Fit}(G)$ is an elementary abelian $r$-group and $G/F\cong {\rm PGL}(2,9)$.

If $r=3$, then by , we conclude that in $\ga(G)$, $2$ and $3$ are adjacent, which is a contradiction.
So let $r\not=3$. Also let $S_3$ be a Sylow $3$-subgroup of ${\rm PGL}(2,9)$. We know that $S_3$ is not cyclic.
On the other hand $F\rtimes S_3$ is a Frobenius group since $3$ is an isolated vertex of $\ga(G)$. This follows that $S_3$ is cyclic which is impossible. Therefore $F=1$ and so $G\cong {\rm PGL}(2,9)$, which completes the proof.
\end{proof}

\end{document}